\documentclass[11pt,reqno,oneside]{amsart}
\usepackage{amsthm,amsmath,amssymb,amscd,thmtools}
\usepackage{fullpage}
\usepackage{graphicx} 
\usepackage{subcaption}

\usepackage[a4paper, total={6in, 9.1in}]{geometry}
\usepackage{hyperref}
\hypersetup{
  colorlinks,
  linkcolor={blue!80!black},
  urlcolor={blue!80!black},
  citecolor={blue!80!black}
}
\usepackage[capitalize,noabbrev]{cleveref}
\usepackage{amsfonts, mathrsfs}
\usepackage{xcolor}
\usepackage[normalem]{ulem}

\usepackage[utf8]{inputenc}
\usepackage{latexsym}
\usepackage[all]{xy}
\usepackage{mathtools}

\newtheorem{theorem}{Theorem}[section]

\newtheorem{lemma}[theorem]{Lemma}

\theoremstyle{definition}
\newtheorem{remark}[theorem]{Remark}

\numberwithin{equation}{section}

\title{Moving Anchor  extragradient methods for smooth structured minimax problems}
\author{James K. Alcala}
\thanks{James K. Alcala was partially supported by a UCR Dissertation Year Program Award.}
\address{Department of Mathematics, University of California, Riverside, USA}
\email{jalca014@ucr.edu}

\author{Yat Tin Chow}
\thanks{Professor Yat Tin Chow is partially supported by a Regents' Faculty Fellowship at the University of California, Riverside, and by grants NSF DMS-2409903 and ONR N00014-24-S-B001.}
\address{Department of Mathematics, University of California, Riverside, USA}
\email{yattinc@ucr.edu}

\author{Mahesh Sunkula}
\address{Department of Mathematics, Purdue University, West Lafayette, Indiana, USA}
\email{msunkula@purdue.edu}

\begin{document}

\begin{abstract}
    This work introduces a moving anchor acceleration technique to extragradient algorithms for smooth structured minimax problems. The moving anchor is introduced as a generalization of the original algorithmic anchoring framework, i.e. the EAG method introduced in \cite{yoon_ryu}, in hope of further acceleration. We show that the optimal order of convergence in terms of worst-case complexity on the squared gradient, $O(1/k^{2}),$ is achieved by our new method (where $k$ is the number of iterations). We have also extended our algorithm to a more general nonconvex-nonconcave class of saddle point problems using the framework of \cite{lee2021fast}, which slightly generalizes \cite{yoon_ryu}. We obtain similar order-optimal complexity results in this extended case. In both problem settings, numerical results illustrate the efficacy of our moving anchor algorithm variants, in particular by attaining the theoretical optimal convergence rate for first order methods, as well as suggesting a better optimized constant in the big O notation which surpasses the traditional fixed anchor methods in many cases. A proximal-point preconditioned version of our algorithms is also introduced and analyzed to match optimal theoretical convergence rates.
\end{abstract}
\maketitle
\section{Introduction}

Minimax, min-max, or saddle point problems of the form
\begin{align}
    \min_{x\in \mathbb{R}^{n}}\max_{y\in \mathbb{R}^{m}} L(x,y) \label{problem_setting}
\end{align}
have received considerable attention from optimization researchers and, in particular, machine learning practitioners because of applications including but not limited to Game Theory, Online Learning, GANs \cite{goodfellow2014generative}, \cite{chavdarova2019reducing}, adversarial learning \cite{madry2017towards}, and reinforcement learning \cite{du2017stochastic}.
Measuring the duality gap $\sup_{y^{*}* \in \mathbb{R}^{m}} L(x,y^{*}*) - \inf_{x^{*}* \in \mathbb{R}^{n}} L(x^{*}*,y)$ on averaged (ergodic) iterates or last-iterates of algorithms is one natural way to measure the suboptimality of methods designed to solve \eqref{problem_setting}. 
This is a clear analog to measuring suboptimality for algorithms for minimization problems. 
On the other hand, such a measurement is not as natural to consider when \eqref{problem_setting} is nonconvex-nonconcave, and as will be discussed, the convergence guarantees for this kind of measure may be limiting.

When problem \eqref{problem_setting} is differentiable, another meaningful measure of suboptimality is the squared gradient norm or Hamiltonian of $L$, $\text{Ham}_{L}(x,y) = \|\nabla L(x,y)\|^{2}.$ 
(Sometimes this includes an extra factor of $\frac{1}{2}$, which is not included in this paper. No physical interpretation of this quantity is used here.)
This suboptimality measure retains meaning for nonconvex-nonconcave problems and convergence rates on the squared gradient-norm have only recently attained order-optimal convergence rates in these problem settings.
This is especially important, as many machine learning settings involve neural networks which result in problems that are inherently nonconvex-nonconcave - and as our results indicate, there may still be room for numerical improvements.

The EAG (extra-anchored gradient) class of algorithms, first introduced in \cite{yoon_ryu}, combinesd extragradient and the more recently developed anchoring methods in a single framework to tackle smooth-structured convex-concave minimax problems.
With the primary assumptions being $R-$smoothness and convexity-concavity of \eqref{problem_setting}, EAG achieved $O(1/k^{2}) = \Omega(1/k^{2})$ convergence rates on the squared gradient-norm; that is, the algorithm is order-optimal.
This achievement has inspired a flurry of research activity in recent years \cite{lee2023accelerating}, \cite{suh2023continuous}, \cite{yoon_ryu}.
To show optimality, the authors of \cite{yoon_ryu} adaopt arguments from \cite{nemirovsky1991optimality}, \cite{nemirovsky1992information} to construct a worse-case analysis for a large class of algorithms that contain EAG.

\subsection{Motivations and Contributions} \; \newline
\indent As anchoring is relatively new compared to extragradient, much of the literature written as a direct consequence of these results emphasizes anchoring and other Halpern adjacent techniques \cite{lee2021semi}, \cite{tran2021halpern}, \cite{tran2022connection}.
However, the EAG class is not without limitations.
The two sub-variants of EAG, EAG-V with varying step-size and EAG-C with constant step-size, have difficult convergence analyses and are both relegated to the convex-concave class of smooth functions.
Addressing some of these issues, the authors of \cite{lee2021fast} introduced the Fast ExtraGradient Method, or FEG.
This method generalizes the results of EAG and EG + \cite{diakonikolas2021efficient} to introduce the order-optimal pairing of the extragradient anchor to the setting of certainthe nonconvex-nonconcave problems (specifically, negative comonotone) and introduces an analysis dependent on terms that are less difficult to work with.
Furthermore, their work improves upon the bounding constant attained in EAG in convex-concave problems while retaining optimal convergence rates for a broader class of problems that are of particular importance to machine learning practitioners, among many others.

In the spirit of these previous works, our contributions are as follows.
\begin{enumerate}
    \item We introduce a new technique, the `called moving anchor,' into the algorithmic settings of EAG-V and FEG under minimal assumptions.
    We demonstrate that in both settings, introducing the moving anchor retains order-optimal $O(1/k^{2})$ convergence rates across a range of parameter choices that using the moving anchor gives one access to. One may recover the original fixed-anchor algorithms via parameter tuning, so our algorithms generalize much of the current anchoring literature.
    \item For both the EAG-V moving anchor and the FEG moving anchor, we run a variety of numerical examples by comparing multiple versions of our moving anchor algorithms with their fixed anchor counterparts.
    These numerical examples demonstrate the efficacy of our algorithm, as in all cases, aone of the moving anchor algorithm variantversions in each example is the fastest algorithm by a constant or is comparable-to-better for all iterations.
    In addition, in many cases the fastest moving anchor algorithm appears to have a massive initial oscillation towards the fixed point that the fixed point algorithms seem to lack - this may be beneficial for reaching certain stopping criteria very quickly.
    \item We develop a theoretical version of the moving anchor algorithms (in both the convex-concave EAG-V and nonconvex-nonconcave FEG) with a proximal anchoring step with fruitful implications for future research.
\end{enumerate}

\section{Prior Work, Literature Review \& Preliminaries}

\subsection{Halpern iteration and anchoring} \; \newline
\indent Introduced in 1967 and inspired by Browder's classical fixed point theorem, the Halpern iteration \cite{halpern1967fixed} is an algorithm built for approximating fixed point(s) of nonexpanding maps in a Hilbert space. Its convergence has been studied in \cite{lieder2021convergence}, and it is extensively used in monotone inclusion-type problem settings \cite{diakonikolas2020halpern}, \cite{tran2021halpern}, \cite{cai2022stochastic}. A recent paper \cite{tran2022connection} draws an explicit connection between Halpern-inspired methods and Nesterov's AGM \cite{nesterov1983method_AGM}, linking two very active strains of acceleration literature.

Directly inspired by Halpern, algorithmic anchoring was recently introduced in the literature \cite{ryu2019ode} and has since been utilized to establish optimal $O(1/k^{2})$ convergence rates for smooth-structured convex-concave minimax problems \cite{yoon_ryu}. 
Since then, these methods have been extended to the nonconvex-nonconcave, negative comonotone problem setting \cite{lee2021fast} and analogous settings for composite problems in a multi-step framework \cite{lee2021semi}. 
Interestingly, this latter framework introduces `semi'-anchoring, where only one part of the descent-ascent step is anchored, and a unique anchor occurs at each step of the multi-step. 
To our knowledge, this is the first instance of an anchoring method that goes beyond a single fixed anchor. 
In \cite{tran2021halpern}, the authors develop an anchored Popov's scheme and a splitting version of the EAG developed in \cite{yoon_ryu}, with a similar analysis.

\subsection{Extragradient methods}
The extragradient method first appeared in \cite{korpelevich1976extragradient} and has since been an important acceleration method extensively studied in the optimization literature \cite{azizian2020tight}, \cite{tseng1995linear}, \cite{liu2019towards}, especially in the context of generative adversarial networks \cite{goodfellow2014generative}, \cite{chavdarova2019reducing} and adversarial training \cite{madry2017towards}.
A classical result regarding these methods is that if $X \in \mathbb{R}^{n}, Y\in \mathbb{R}^{m}$ are compact domains, then for the duality gap $\max_{y* \in Y} L(x,y*) - \min_{x* \in X} L(x*,y),$ the ergodic iterate of extragradient-type methods \cite{nemirovski2004prox}, \cite{nesterov2007dual} have an $O(1/k)$ rate, which is order-optimal \cite{ouyang2021lower}, \cite{nemirovskij1983problem}.
Recently, it was shown that the last iterate convergence rate for extragradient also attains $O(1/k)$ convergence \cite{gorbunov2022extragradient}, with \textit{only} monotonicity and Lipschitz assumptions. 
This closes the gap between the last-iterate and ergodic-iterate convergence rates for extragradient discussed in \cite{golowich}.
Another recent interesting result was attained in \cite{diakonikolas2021efficient}, where the authors developed the Extragradient$+$ method, a variant of extragradient extended to various nonconvex-nonconcave problem settings.

On the other hand, when the problem at hand has certain smoothness properties, the squared gradient norm $\|\nabla L\|^{2}$ for extragradient-type algorithms recently achieved order-optimal convergence of $O(1/k^{2})$ \cite{yoon_ryu}, \cite{lee2021fast}, thanks in part to the synthesis with anchoring.
This breaks the bound of the SCLI class of algorithms discussed in \cite{golowich}, which contains the unmodified extragradient, because EAG is \textit{not} SCLI, but specifically 2-CLI or in an extended class of 1-CLI algorithms.
See Appendix D.2 of \cite{yoon_ryu} for a best-iterate (NOT last iterate, at the time of writing this quantity doesn't seem to be known) convergence analysis of extragradient and Appendix E of \cite{yoon_ryu} and \cite{golowich} for more details on the relationships between these classes of algorithms. 
We conclude this discussion by remarking that for smooth problems, the bound on the squared gradient norm is meaningful in nonconvex-nonconcave problem settings, and as demonstrated in this work and these recent works, has room for numerical improvement.

\subsection{Preliminaries}

A saddle function $L: \mathbb{R}^{n} \times \mathbb{R}^{m} \to \mathbb{R}$ is (non)convex-(non)concave if it is (non)convex in $x$ for any fixed $y\in\mathbb{R}^{m}$ and (non)concave in $y$ for any fixed $x\in\mathbb{R}^{n}$.
A saddle point $(\hat{x}, \hat{y}) \in \mathbb{R}^{n} \times \mathbb{R}^{m}$ is any point such that the inequality $L(\hat{x}, y) \leq L(\hat{x}, \hat{y}) \leq L(x, \hat{y})$ for all $x \in \mathbb{R}^n$ and $y\in \mathbb{R}^{m}.$
Solutions to \eqref{problem_setting} are defined as saddle points.

Throughout this paper, we assume the differentiability of $L$, and we are especially interested in the so-called \textit{saddle operator} associated to $L$,
\begin{align} \label{sad_operator} G_{L}(z) &= \left[
    \begin{array}{c}
    \nabla_{x} L(x,y)\\
    -\nabla{y} L(x,y)
\end{array}
\right] \end{align}
where the $L$ subscript is omitted when the underlying saddle function is known.
When our problem is convex-concave, the operator \eqref{sad_operator} is known to be monotone \cite{rockafellar1970monotone}, meaning $\langle G_{L}(z_{1}) - G_{L}(z_{2}),z_{1} - z_{2} \rangle \geq 0 \;\forall z_{1}, z_{2} \in \mathbb{R}^{n} \times \mathbb{R}^{m}.$
We assume that this operator $G_{L}$ is $R$-Lipschitz, or has certain stronger Lipschitz properties we detail later; this is sometimes referred to as $L$ being $R$-smooth.
With these properties in mind, one may introduce an assumption that generalizes monotonicity: let $\rho \in (-\frac{1}{2R}, +\infty).$ 
In this paper, we assume that when $G_{L}$ is \textit{not} monotone, it satisfies
\[
    \langle G_{L}(z_{1}) - G_{L}(z_{2}),z_{1} - z_{2} \rangle \geq  \rho \|G_{L}(z_{1}) - G_{L}(z_{2})\|^{2} \; \forall z_{1}, z_{2} \in \mathbb{R}^{n} \times \mathbb{R}^{m}.
\]
When $\rho >0,$ this is called co-coercivity; when $\rho=0,$ this recovers monotonicity; when $\rho<0,$ this is called negative comonotonicity.
This latter condition on \eqref{sad_operator} allows one to consider certain nonconvex-nonconcave problems $L$, and is also going to be a central focus of this work.
Note, however, that these assumptions need not cover all smooth nonconvex-nonconcave problems of interest.
Figure 1, Table 1, and Example 1 of \cite{lee2021fast} illustrate broader problem classes than negative comonotonicity that retain smoothness while being nonconvex-nonconcave.

Finally we state that although $\nabla L \ne G_{L},$ we have $\|\nabla L\| = \|G_{L}\|,$ so we may use these expressions interchangeably.

\section{Original Algorithm, EAG-V}

The Extra Anchored Gradient Algorithm, or EAG with varying step size (EAG-V) has a simple statement and a relatively simple proof of convergence:

\begin{align}
    \label{original_eagv1} z^{k+1/2}   &= z^{k} + \beta_{k}(z^{0} - z^{k}) - \alpha_{k}G(z^{k})\\
    \label{original_eagv2} z^{k+1}     &= z^{k} + \beta_{k}(z^{0} - z^{k}) - \alpha_{k}G(z^{k + 1/2})\\
    \notag \alpha_{k+1}&= \frac{\alpha_{k}}{1 - \alpha_{k}^{2}R^{2}}\left(1 - \frac{(k+2)^{2}}{(k+1)(k+3)}\alpha_{k}^{2}R^{2}\right)\\
    \label{original_alpha} &= \alpha_{k}\left(1 - \frac{1}{(k+1)(k+3)}\frac{\alpha_{k}^{2}R^{2}}{1 - \alpha_{k}^{2}R^{2}}\right)
\end{align}

with $\alpha_{0} \in (0,1/R),$ and R a predetermined constant.
Here, $G$ is the so-called saddle operator, $G:=(\nabla_{x}L,-\nabla_{y}L)$ and $L$ is a convex-concave saddle function in a minimax optimization problem.
It is a nontrivial fact that $G$ is monotone \cite{yoon_ryu}.
The structure of the $\alpha_{k}$'s and $\beta_{k}$'s are detailed below alongside auxiliary sequences $A_k$ and $B_k$. 
We state the convergence of this algorithm as a theorem and relay the details of its convergence via a specific Lyapunov functional as a lemma.
For more details, including a version of EAG with a non-varying step size, see \cite{yoon_ryu}.
\begin{theorem}[EAG-V convergence rate \cite{yoon_ryu}]
    Assume $L: \mathbb{R}^{n} \times \mathbb{R}^{m} \to \mathbb{R}$ is an $R$-smooth convex-concave function with a saddle point $z^{*}.$
    Assume further that $\alpha_{0} \in (0,\frac{3}{4R})$ and define $\alpha_{\infty} = \lim_{k\to\infty}\alpha_{k}.$
    Then EAG-V converges, with rate
    \[
    ||\nabla G(z^{k})||^{2} \leq \frac{4(1 + \alpha_{0}\alpha_{\infty}R^{2})}{\alpha_{\infty}^{2}}\frac{||z^{0} - z^{*}||^{2}}{(k+1)(k+2)}
    \]
    where $G = (\nabla L|_{x \in \mathbb{R}^{n}}, -\nabla L|_{-y \in -\mathbb{R}^{m}}).$
\end{theorem}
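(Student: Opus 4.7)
The approach I would take follows the standard Lyapunov/estimate-sequence template for anchored extragradient methods. The idea is to introduce auxiliary positive sequences $A_k$ and $B_k$ (whose precise form will be dictated by the algebra) and define a Lyapunov functional of the form
\[
V_k \;=\; A_k \,\|G(z^{k})\|^{2} \;+\; B_k \,\langle G(z^{k}),\, z^{k} - z^{0}\rangle,
\]
with the plan of proving the monotonicity property $V_{k+1} \le V_k$. Granting monotonicity, telescoping back to $V_0$ would give $A_k \|G(z^k)\|^2 \le V_0$, and then a direct estimate of $V_0$ in terms of $\|z^0-z^*\|^2$ (using $G(z^*)=0$ and Cauchy--Schwarz) together with a growth lower bound of the form $A_k = \Theta(k^2/\alpha_\infty^{\,2})$ would yield the claimed $O(1/(k+1)(k+2))$ rate with the explicit constant $4(1+\alpha_0\alpha_\infty R^2)/\alpha_\infty^2$.

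The bulk of the work is in establishing $V_{k+1}\le V_k$. First I would write the one-step decrement $V_{k+1} - V_k$ fully in terms of the iterates, substituting in
\[
z^{k+1} - z^{k} \;=\; \beta_k (z^{0} - z^{k}) - \alpha_k G(z^{k+1/2}),
\qquad z^{k+1/2} - z^{k} \;=\; \beta_k(z^0 - z^k) - \alpha_k G(z^k),
\]
and then expand $\|G(z^{k+1})\|^2$ and $\langle G(z^{k+1}), z^{k+1}-z^0\rangle$. Next I would invoke monotonicity of $G$ at the pivot point $z^{k+1/2}$, in the two forms
\[
\langle G(z^{k+1}) - G(z^{k+1/2}),\, z^{k+1} - z^{k+1/2}\rangle \;\ge\; 0,
\qquad
\langle G(z^{k+1/2}) - G(z^{*}),\, z^{k+1/2} - z^{*}\rangle \;\ge\; 0,
\]
along with the $R$-Lipschitz bound $\|G(z^{k+1}) - G(z^{k+1/2})\| \le R\|z^{k+1}-z^{k+1/2}\|$. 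This produces several inner-product and squared-norm terms which, after grouping, should reduce to an expression that is manifestly nonpositive provided the coefficients satisfy a small system of algebraic identities. I expect the correct choice to be $\beta_k = 1/(k+2)$ and $A_k, B_k$ of the form $A_k = \alpha_k^{\,2}(k+1)(k+2)/\text{something}$, $B_k = \alpha_k(k+1)$, matched so that the step-size recursion on $\alpha_k$ stated in the algorithm is exactly the condition needed for the residual terms in $V_{k+1}-V_k$ to cancel. Verifying this matching is the main obstacle: it is a delicate bookkeeping exercise where every cross-term must be accounted for, and it is precisely the recursion
\[
\alpha_{k+1} \;=\; \frac{\alpha_k}{1-\alpha_k^{2}R^{2}}\!\left(1-\tfrac{(k+2)^{2}}{(k+1)(k+3)}\alpha_k^{2}R^{2}\right)
\]
that makes the dissipation identity close up.

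With monotonicity of $V_k$ in hand, the remainder is routine. I would first show, using $\alpha_0 \in (0, 3/(4R))$, that the sequence $\{\alpha_k\}$ is decreasing, positive, and bounded below by some $\alpha_\infty > 0$; this monotonicity together with the explicit recursion gives both existence of $\alpha_\infty$ and the quantitative two-sided estimate $\alpha_\infty \le \alpha_k \le \alpha_0$. Next I would evaluate $V_0$: since $z^{1/2}$ and $z^1$ coincide with anchored quantities involving only $z^0$, one finds $V_0 \le (1+\alpha_0\alpha_\infty R^2)\|z^0-z^*\|^2$ after again using the monotonicity inequality at $z^*$ and Lipschitzness. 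Finally, dividing $V_k \le V_0$ by $A_k$ and substituting the lower bound $A_k \ge \alpha_\infty^{\,2}(k+1)(k+2)/4$ recovers the theorem exactly as stated.

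The principal difficulty, as noted, is the algebraic verification that the prescribed $\alpha_k$-update produces a perfect cancellation in $V_{k+1}-V_k$; everything else is careful but mechanical manipulation. This delicate cancellation is precisely why the Lyapunov coefficients and step-size rule look so intricate, and it is the reason a fixed anchor (rather than arbitrary choices) makes the analysis work in the original EAG-V framework.
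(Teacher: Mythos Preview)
Your overall template---Lyapunov functional $V_k = A_k\|G(z^k)\|^2 + B_k\langle G(z^k), z^k-z^0\rangle$, prove $V_{k+1}\le V_k$, then extract the rate---is exactly the approach the paper (quoting \cite{yoon_ryu}) takes. However, several of your concrete steps are off in ways that would prevent the argument from closing.

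First, the coefficients: the paper takes $B_k = k+1$ (independent of $\alpha_k$) and $A_k = \tfrac{\alpha_k}{2\beta_k}B_k = \tfrac{\alpha_k(k+1)(k+2)}{2}$, which is \emph{linear} in $\alpha_k$, not quadratic as you guess. This matters for the final constant. Second, in the decrement analysis the monotonicity inequality that is actually added in is $\langle G(z^k)-G(z^{k+1}),\,z^k-z^{k+1}\rangle\ge 0$ (scaled by $B_k/\beta_k$), not the two pivot-at-$z^{k+1/2}$ inequalities you propose; the Lipschitz bound is applied between $z^{k+1}$ and $z^{k+1/2}$. The inequality involving $z^*$ plays no role in showing $V_{k+1}\le V_k$.

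The more serious gap is in your extraction step. You write ``telescoping back to $V_0$ would give $A_k\|G(z^k)\|^2 \le V_0$,'' but this is false: the cross term $B_k\langle G(z^k), z^k-z^0\rangle$ can be negative and cannot be dropped. The correct route (and this is exactly where the saddle point $z^*$ enters) is to use monotonicity at $z^*$, namely $\langle G(z^k), z^k-z^*\rangle \ge 0$, to replace $z^k$ by $z^*$ in the inner product, then apply Young's inequality:
\[
V_k \;\ge\; A_k\|G(z^k)\|^2 - B_k\|G(z^k)\|\,\|z^0-z^*\| \;\ge\; \tfrac{A_k}{2}\|G(z^k)\|^2 - \tfrac{B_k^2}{2A_k}\|z^0-z^*\|^2.
\]
Since $\tfrac{B_k^2}{2A_k} = \tfrac{k+1}{\alpha_k(k+2)} \le \tfrac{1}{\alpha_\infty}$ and $V_0 = \alpha_0\|G(z^0)\|^2 \le \alpha_0 R^2\|z^0-z^*\|^2$ (the inner product in $V_0$ vanishes), you obtain
\[
\tfrac{\alpha_\infty(k+1)(k+2)}{4}\|G(z^k)\|^2 \;\le\; \bigl(\alpha_0 R^2 + \tfrac{1}{\alpha_\infty}\bigr)\|z^0-z^*\|^2,
\]
which rearranges to the stated bound. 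In particular, the ``$+1$'' in $1+\alpha_0\alpha_\infty R^2$ comes from the $B_k^2/(2A_k)$ term in the lower bound on $V_k$, not from the upper bound on $V_0$ as you suggest.
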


Since $z^{*}$ is the saddle point, this theorem demonstrates $O(1/k^{2})$ convergence of the algorithm. To derive this order of convergence, the following lemma is necessary.

\begin{lemma}[EAG Lyapunov Functional \cite{yoon_ryu}] \label{fixed_anchor_Lyapunov_lemma}
    Let $\{\beta_{k}\}_{k\geq0} \subseteq (0,1)$ and $\alpha_{0} \in (0,\frac{1}{R})$ be given.
    Consider the following sequences defined by the given recurrence relations for $k \geq 0:$
    \begin{align}
        \label{first_A} A_{k} &= \frac{\alpha_{k}}{2\beta_{k}}B_{k}\\
        \label{first_B} B_{k+1} &= \frac{B_{k}}{1 - \beta_{k}}\\
        \label{first_alpha} \alpha_{k+1} &= \frac{ \alpha_{k} \beta_{k+1}(1 - \alpha_{k}^{2}R^{2} - \beta_k^{2}) }{ \beta_{k}(1 - \beta_{k})(1 - \alpha_{k}^{2}R^{2}) }
    \end{align}
    where $B_{0} = 1.$
    Assume that $\alpha_{k} \in (0,\frac{1}{R})$ holds for all $k \geq 0,$ and that $L$ is $R-$smooth and convex-concave.
    Then the sequence $\{V_{k}\}_{k\geq 0}$ defined as
    \begin{align}
        \label{original_fxnal} V_{k} &:=A_{k}\|G(z^{k})\|^{2} + B_{k}\langle G(z^{k}),z^{k} - z^{0}\rangle
    \end{align}
    is non-increasing.
\end{lemma}

Within \eqref{original_fxnal}, choosing $\beta_{k} = \frac{1}{k+2}$ yields $B_{k} = k+1, A_{k} = \frac{\alpha_{k}(k+2)(k+1)}{2},$ and the construction of $\alpha_{k+1}$ in \eqref{first_alpha}.

\section{EAG-V with moving anchor}

In this section, we construct and analyze a new version of the EAG-V algorithm. Here, the anchoring point moves at each time step. We call this the moving anchor algorithm; it utilizes a similar extragradient step. Further down, we demonstrate comparable rates of convergence to the original EAG algorithm with varying step-size.

For the $k-th$ iterate of $z^{0} \in \mathbb{R}^{n} \times \mathbb{R}^{m},$ the EAG-V with moving anchor is defined as 
\begin{align}
    \notag z^{0}       &= \bar{z}^{0}\\
    \label{eagv update1} z^{k+1/2}   &= z^{k} + \frac{1}{k+2}(\bar{z}^{k} - z^{k}) - \alpha_{k}G(z^{k})\\
    \label{eagv update2} z^{k+1}     &= z^{k} + \frac{1}{k+2}(\bar{z}^{k} - z^{k}) - \alpha_{k}G(z^{k + 1/2})\\
    \label{eagv update3} \bar{z}^{k+1}&= \bar{z}^{k} + \gamma_{k+1}G(z^{k+1})
\end{align}

The major structural difference here is the introduction of the regularly-updating $\bar{z}^{k},$ analogous to the role of $z^{0}$ in the EAG-V detailed in the previous section.
\eqref{eagv update3} is the regular update for this anchor; it depends on the algorithm update \eqref{eagv update2} rather than exclusively on itself.
All previously defined terms are the same as in the fixed anchor algorithm, now with

\begin{align}
    \label{c_update_1} c_{k+1}     &\leq \frac{c_{k}}{1 + \delta_{k}},\\
    \label{gamma_update_1} \gamma_{k+1}&\leq \frac{B_{k+1}}{c_{k+1}(1 + \frac{1}{\delta_{k}})}.
\end{align}

We choose $\delta_{k}$ so that $\displaystyle \sum_{k=0}^{\infty} \log(1 + \delta_{k}) < \infty$. 
The $c_{k}$ terms are part of the definition of the Lyapunov functional we use in our analysis; these come in handy when we use $\gamma_{k}$ to absolve terms.
Let $c_{\infty}:= \lim_{k \to \infty} c_{k} = c_{0} \prod_{k=0}^{\infty} \frac{1}{1+\delta_{k}}$. 
As a general rule, one wishes to choose $c_{0}$ so that $c_{\infty}$ satisfies some specified convergence constraint; these constraints will appear throughout the major convergence theorems in this section and the next section.
While the choice of $c_{0}$ is therefore limited to according to certain problem/algorithm constraints, in general there seems to be much freedom in choosing $c_{0}$ and the sequence $\{\delta_{k}\}.$ 
For the rest of this article, we take \eqref{c_update_1} and \eqref{gamma_update_1} to be given with equal signs instead of inequalities and as in the fixed anchor case, we will take $\beta_{k} = \frac{1}{k+2},$ resulting in similar sequences \eqref{first_A}, \eqref{first_B}, \eqref{first_alpha} for the moving anchor.
Before we proceed with the analysis, we emphasize that the original EAG-V algorithm may be recovered simply by setting $\gamma_{k+1} := 0$ for all $k.$

Now, we give the definition of the Lyapunov functional and show that it is nonincreasing:

\begin{lemma}\label{LpnvLm_1}
    The Lyapunov functional
    \begin{equation}
        \label{first_functional} V_{k} := A_{k}\|G(z^{k})\|^{2} + B_{k}\langle G(z^{k}),z^{k} - \bar{z}^{k}\rangle +  c_{k}\|z^{*} - \bar{z}^{k}\|^{2},
    \end{equation}
    corresponding to the moving anchor EAG-V algorithm \eqref{eagv update1} through \eqref{eagv update3} with constants $A_k, B_k, c_k, \beta_{k}$ defined as in \eqref{first_A}, \eqref{first_B}, \eqref{first_alpha}, and \cref{fixed_anchor_Lyapunov_lemma}, along with sequences $c_{k}, \gamma_{k}$ defined in \eqref{c_update_1}, \eqref{gamma_update_1}, is non increasing.
\end{lemma}
\begin{proof}

First we reorganize some of the algorithm statements and label them for use later.
\begin{align}
    \quad& z^{k} - z^{k+1} = \beta_{k}(z^{k} - \bar{z}^{k}) + \alpha_{k}G(z^{k+1/2}) \label{eagv re update1}\\
    \label{eagv re update2} \quad& z^{k+1/2} - z^{k+1} = \alpha_{k}(G(z^{k+1/2}) - G(z^{k}))\\
    \label{eagv re update3} \quad& \bar{z}^{k} - z^{k+1} = (1 - \beta_{k})(\bar{z}^{k} - z^{k}) + \alpha_{k}G(z^{k+1/2})\\
    \label{eagv re update4} \quad& \bar{z}^{k} - \bar{z}^{k+1} = -\gamma_{k+1}G(z^{k+1})
\end{align}
\eqref{eagv re update1} comes from rearranging \eqref{eagv update2}, \eqref{eagv re update2} comes from taking the difference between \eqref{eagv update1} and \eqref{eagv update2}, \eqref{eagv re update3} is $\bar{z}^{k}$ minus \eqref{eagv update2}, and \eqref{eagv re update4} is \eqref{eagv update3} rearranged. 
The overall goal of this proof is to show that the difference $V_{k} - V_{k+1}$ is nonnegative.
\begin{align*}
    V_{k} &- V_{k+1}\\
    \geq &A_{k}\|G(z^{k})\|^{2} - A_{k+1}\|G(z^{k+1})\|^{2} \underbrace{+B_{k}\langle z^{k} - \bar{z}^{k},G(z^{k})\rangle}_{\text{I}} \\
    &\underbrace{- B_{k+1}\langle z^{k+1} - \bar{z}^{k+1},G(z^{k+1})\rangle}_{\text{II}}
    +c_{k}\|z^{*} - \bar{z}^{k}\|^{2} - c_{k+1}\|z^{*} - \bar{z}^{k+1}\|^{2}\\
    & \underbrace{-\frac{B_{k}}{\beta_{k}}\langle z^{k} - z^{k+1},G(z^{k}) - G(z^{k+1})\rangle}_{\text{III}}
\end{align*}
    Notice that the last term above, III, is not part of the definition of $V_{k}$ nor $V_{k+1}.$ It has been introduced to aid in the proof and is nonnegative by the monotonicity of $G$. We would like to absolve any terms containing the $\bar{z}^{k}, \bar{z}^{k+1}$ terms. To accomplish this, our next goal is to focus on turning the labeled parts (I, II, III) of the above line into
\[
    \underbrace{\alpha_{k}B_{k+1}\langle G(z^{k + 1/2},G(z^{k + 1})\rangle + \frac{B_{k+1}}{\gamma_{k+1}}\|\bar{z}^{k} - \bar{z}^{k + 1}\|^{2} - \frac{\alpha_{k}B_{k}}{\beta{k}}\langle G(z^{k + 1/2}),G(z^{k})-G(z^{k + 1})\rangle}_{\text{IV}}.
\]
We now detail this process.
The term I does not change. For II, on the other hand, we have
\begin{align}
    &\underbrace{-B_{k+1}\langle z^{k+1}- \bar{z}^{k+1},G(z^{k+1})\rangle}_{\text{II}}\notag\\
    \label{for_neg_gamma1} &=B_{k+1}\langle \bar{z}^{k} - z^{k+1},G(z^{k+1})\rangle - B_{k+1}\langle \bar{z}^{k} - \bar{z}^{k+1},G(z^{k+1})\rangle\\
    \label{for_neg_gamma2} &=B_{k+1}\langle (1-\beta_{k})(\bar{z}^{k} - z^{k}) + \alpha_{k}G(z^{k+1/2}),G(z^{k+1})\rangle - B_{k+1}\langle -\gamma_{k+1}G(z^{k+1}),G(z^{k+1})\rangle
\end{align}
where the first equality comes from recognizing $z^{k+1} - \bar{z}^{k+1} = z^{k+1} - \bar{z}^{k} + \bar{z}^{k} - \bar{z}^{k+1}$ and the second comes from substituting in equality \eqref{eagv re update3} and \eqref{eagv re update4}. For III, 
\begin{align}
    \notag &\underbrace{-\frac{B_{k}}{\beta_{k}}\langle z^{k} - z^{k+1},G(z^{k}) - G(z^{k+1})\rangle}_{\text{III}}\\
    \label{middle_brown_term_line}&=-\frac{B_{k}}{\beta_{k}}\langle z^{k} - z^{k+1},G(z^{k})\rangle +\frac{B_{k}}{\beta_{k}}\langle z^{k} - z^{k+1},G(z^{k+1})\rangle\\
    \notag &=-\frac{B_{k}}{\beta_{k}}\langle\beta_{k}(z^{k} - \bar{z}^{k}) + \alpha_{k}G(z^{k+1/2}),G(z^{k})\rangle +\frac{B_{k}}{\beta_{k}}\langle\beta_{k}(z^{k} - \bar{z}^{k}) + \alpha_{k}G(z^{k+1/2}),G(z^{k+1})\rangle,
\end{align}
where the last equality is a result of substituting in \eqref{eagv re update1} in each of the first arguments of the two terms in \eqref{middle_brown_term_line}.
Now, we can begin simplify everything we've done to obtain IV.
\begin{align}
    \label{formerly_a}&\underbrace{\langle z^{k} - \bar{z}^{k},G(z^{k})\rangle}_{\text{I}}\\
    \label{formerly_b}&\underbrace{\langle(1 - \beta_{k})(z^{k} - \bar{z}^{k}) -\alpha_{k}G(z^{k+1/2})-\gamma_{k+1}G(z^{k+1}),G(z^{k+1})\rangle}_{\text{II}}\\
    \label{formerly_c}&\underbrace{-\frac{B_{k}}{\beta_{k}}\langle\beta_{k}(z^{k} - \bar{z}^{k}) + \alpha_{k}G(z^{k+1/2}),G(z^{k})\rangle}_{\text{III}}\\ \label{formerly_d}&\underbrace{+\frac{B_{k}}{\beta_{k}}\langle\beta_{k}(z^{k} - \bar{z}^{k}) + \alpha_{k}G(z^{k+1/2}),G(z^{k+1})\rangle}_{\text{III}}
\end{align}
From here, we'll use two facts. 
First, $B_{k+1} = \frac{B_{k}}{1 - \beta_{k}}.$
This allows us to combine and cancel the very first component of \eqref{formerly_b} with the $\beta_{k}(z^{k} - \bar{z}^{k})$ component of \eqref{formerly_d}.
Additionally, \eqref{formerly_a} cancels with the $\beta_{k}(z^{k} - \bar{z}^{k})$ component of \eqref{formerly_c}. This leaves us with
\begin{align}
    \notag =& \underbrace{\alpha_{k}B_{k+1}\langle G(z^{k+1/2}),G(z^{k+1})\rangle + B_{k+1}\langle\gamma_{k+1}G(z^{k+1}),G(z^{k+1})\rangle}_{\text{II}}\\
    \notag &\underbrace{-\frac{B_{k}\alpha_{k}}{\beta_{k}}\langle G(z^{k+1/2}),G(z^{k})\rangle + \frac{B_{k}\alpha_{k}}{\beta_{k}}\langle G(z^{k+1/2}),G(z^{k+1})\rangle}_{\text{III}}\\
    \notag =&\underbrace{\alpha_{k}B_{k+1}\langle G(z^{k+1/2}),G(z^{k+1})\rangle + \frac{B_{k+1}}{\gamma_{k+1}}\|\bar{z}^{k} - \bar{z}^{k+1}\|^{2}}_{\text{IV}}\\
    \notag &\underbrace{-\frac{\alpha_{k}B_{k}}{\beta_{k}}\langle G(z^{k+1/2}),G(z^{k}) - G(z^{k+1})\rangle}_{\text{IV}},
\end{align}
where the last equality is a result of applying the anchor update to get the norm squared term, and combining the latter two terms while leaving $G(z^{k+1/2})$ fixed.

Thus, we've shown
\begin{align}
    \notag &A_{k}\|G(z^{k})\|^{2} - A_{k+1}\|G(z^{k+1})\|^{2}\\
    \notag +&B_{k}\langle z^{k} - \bar{z}^{k},G(z^{k})\rangle - B_{k+1}\langle z^{k+1} - \bar{z}^{k+1},G(z^{k+1})\rangle\\
    \notag +& c_{k}\|z^{*} - \bar{z}^{k}\|^{2} - c_{k+1}\|z^{*} - \bar{z}^{k+1}\|^{2} - \frac{B_{k}}{\beta_{k}}\langle z^{k} - z^{k+1},G(z^{k}) - G(z^{k+1})\rangle \\
    \label{viable_line_1}=&A_{k}\|G(z^{k})\|^{2} - A_{k+1}\|G(z^{k+1})\|^{2} + \alpha_{k}B_{k+1}\langle G(z^{k+1/2}),G(z^{k+1})\rangle \\
    \label{viable_line_2}-&\frac{\alpha_{k}B_{k}}{\beta_{k}}\langle G(z^{k+1/2}),G(z^{k}) - G(z^{k+1})\rangle \\
    \label{viable_line_3}+&c_{k}\|z^{*} - \bar{z}^{k}\|^{2} - c_{k+1}\|z^{*} - \bar{z}^{k+1}\|^{2} + \frac{B_{k+1}}{\gamma_{k+1}}\|\bar{z}^{k} - \bar{z}^{k+1}\|^{2}
\end{align}
Now, we continue on with our goal of absolving terms.
From Cauchy, we have that
\begin{align}
    \label{cauchy_conv_conc} \quad ||z^{*} - \bar{z}^{k+1}\|^{2} \leq (1 + \delta_{k})\|z^{*} - \bar{z}^{k}\|^{2} + (1+\frac{1}{\delta_{k}})\|\bar{z}^{k} - \bar{z}^{k+1}\|^{2}
\end{align}
and from the algorithm definition,
\begin{align}
    \label{definition_cterm_gammaterm} \quad c_{k+1} = \frac{c_{k}}{1 + \delta_{k}}, \quad \gamma_{k+1} = \frac{B_{k+1}}{c_{k+1}(1 + \frac{1}{\delta_{k}})}.
\end{align}
We apply \eqref{cauchy_conv_conc} to \eqref{viable_line_3} to obtain
\begin{align*}
    \geq&A_{k}\|G(z^{k})\|^{2} - A_{k+1}\|G(z^{k+1})\|^{2} + \alpha_{k}B_{k+1}\langle G(z^{k+1/2}),G(z^{k+1})\rangle\\
    -&\frac{\alpha_{k}B_{k}}{\beta_{k}}\langle G(z^{k+1/2}),G(z^{k}) - G(z^{k+1})\rangle + c_{k}\|z^{*} - \bar{z}^{k}\|^{2}\\
    -&c_{k+1}\big{(}(1 + \delta_{k})\|z^{*} - \bar{z}^{k}\|^{2} + (1+\frac{1}{\delta_{k}})\|\bar{z}^{k} - \bar{z}^{k+1}\|^{2}\big{)} + \frac{B_{k+1}}{\gamma_{k+1}}\|\bar{z}^{k} - \bar{z}^{k+1}\|^{2}
\end{align*}
and now we apply \eqref{definition_cterm_gammaterm}:
\begin{align*}
    \geq&A_{k}\|G(z^{k})\|^{2} - A_{k+1}\|G(z^{k+1})\|^{2} + \alpha_{k}B_{k+1}\langle G(z^{k+1/2}),G(z^{k+1})\rangle\\
    -&\frac{\alpha_{k}B_{k}}{\beta_{k}}\langle G(z^{k+1/2}),G(z^{k}) - G(z^{k+1})\rangle + c_{k}\|z^{*} - \bar{z}^{k}\|^{2}\\
    -&c_{k}\|z^{*} - \bar{z}^{k}\|^{2} - \frac{B_{k+1}}{\gamma_{k+1}}\|\bar{z}^{k} - \bar{z}^{k+1}\|^{2} + \frac{B_{k+1}}{\gamma_{k+1}}\|\bar{z}^{k} - \bar{z}^{k+1}\|^{2}\\
    =&A_{k}\|G(z^{k})\|^{2} - A_{k+1}\|G(z^{k+1})\|^{2} + \alpha_{k}B_{k+1}\langle G(z^{k+1/2}),G(z^{k+1})\rangle\\
    -&\frac{\alpha_{k}B_{k}}{\beta_{k}}\langle G(z^{k+1/2}),G(z^{k}) - G(z^{k+1})\rangle + 0.
\end{align*}

At this point, showing that the remaining terms are nonnegative is nontrivial, but directly follows the arguments made in the proof of Lemma 2 in \cite{yoon_ryu}.
Specifically, following $(29)$ onwards in \cite{yoon_ryu}, one will find that
\begin{align*}
    A_{k}&\|G(z^{k})\|^{2} - A_{k+1}\|G(z^{k+1})\|^{2} + \alpha_{k}B_{k+1}\langle G(z^{k+1/2}),G(z^{k+1})\rangle\\
    -&\frac{\alpha_{k}B_{k}}{\beta_{k}}\langle G(z^{k+1/2}),G(z^{k}) - G(z^{k+1})\rangle\\
    \geq 0,
\end{align*}
which completes the proof.
\end{proof}

Now we have the primary result of this section.
\begin{theorem} \label{conv_conc_convergence_thm}
    The EAG-V algorithm with moving anchor \eqref{eagv update1}, \eqref{eagv update2}, and \eqref{eagv update3} together with the Lyapunov functional $V_{k}$ \eqref{first_functional} described in \cref{LpnvLm_1}, has convergence rate
    \begin{equation}
        \label{convex_concave_bound} \|G(z^{k})\|^{2}\leq \frac{4(\alpha_{0}R^{2} + c_{0})\|z^{0}-z^{*}\|^{2}2V_{0}}{\frac{\alpha_{\infty}}{4}(k+1)(k+2)}
    \end{equation}
    as long as we assume $c_{\infty}\alpha_{\infty} \geq 1.$
\end{theorem}
\begin{proof}
    For the most part, this argument parallels the analogous argument found in \cite{yoon_ryu}.
    We use the Lyapunov functional to isolate and bound $\|G(z^{k})\|^{2}.$
    \begin{align}
        \label{bound_on_Vk} V_{k} &\leq V_{0} = \alpha_{0}\|G(z^{0})\|^{2} + c_{0}\|z^{0}-z^{*}\|^{2}\\
        \notag &\leq (\alpha_{0}R^{2} + c_{0})\|z^{0}-z^{*}\|^{2}
    \end{align}
    by $R-$smoothness.
    On the other hand,
    \begin{align}
        \notag V_{k} &= A_{k}\|G(z^{k})\|^{2} + B_{k}\langle G(z^{k}),z^{k}-\bar{z}^{k}\rangle + c_{k}\|z^{*}-\bar{z}^{k}\|^{2}\\
        \notag &\geq A_{k}\|G(z^{k})\|^{2} + B_{k}\langle G(z^{k}),z^{*}-\bar{z}^{k}\rangle + c_{k}\|z^{*}-\bar{z}^{k}\|^{2}\\
        \notag &\geq \frac{A_{k}}{2}\|G(z^{k})\|^{2} + (c_{k} - \frac{B_{k}^{2}}{2A_{k}})\|z^{*}-\bar{z}^{k}\|^{2}\\
        \notag &=\frac{\alpha_{k}(k+1)(k+2)}{4}\|G(z^{k})\|^{2} + (c_{k} - \frac{k+1}{\alpha_{k}(k+2)})\|z^{*}-\bar{z}^{k}\|^{2}\\
        \notag &\geq \frac{\alpha_{\infty}}{4}(k+1)(k+2)\|G(z^{k})\|^{2} + (c_{\infty} - \frac{1}{\alpha_{\infty}})\|z^{*}-\bar{z}^{k}\|^{2}\\
        \notag &\geq \frac{\alpha_{\infty}}{4}(k+1)(k+2)\|G(z^{k})\|^{2}
    \end{align}
    As long as $c_{\infty} \geq \frac{1}{\alpha_{\infty}},$ the second to last line above is positive, and we may focus on the inequality given to us by the last line above:
    \[
        \frac{\alpha_{\infty}}{4}(k+1)(k+2)\|G(z^{k})\|^{2} \leq (\alpha_{0}R^{2} + c_{0})\|z^{0}-z^{*}\|^{2}.
    \]
    Dividing both sides by the constant $\frac{\alpha_{\infty}}{4}(k+1)(k+2)$ gives the desired result.
\end{proof}

\subsection{Proof of convergence for $-\gamma_{k}$}

We next show that, for a slightly restricted choice of $\gamma_{k},$ our proof works for $-\gamma_{k}$ in place of $\gamma_{k}.$ This is of interest as numerical results indicate that certain problem settings favor $-\gamma_{k}$ in terms of convergence speed by a constant, while $+\gamma_{k}$ seems to be favored in other settings$-\gamma_{k}$ exhibits faster convergence to the saddle point by a constant.

\begin{lemma} \label{neg_gam_lemma1}
    Replacing $\gamma_{k}$ with $-\gamma_{k}$ in the definition of the EAG-V algorithm with moving anchor \eqref{eagv update1}, \eqref{eagv update2}, \eqref{eagv update3}, and suppose $\displaystyle \gamma_{k+1} = \min{\frac{B_{k+1}}{c_{k+1}(1+ \frac{1}{\delta_{k}})},\frac{e_{k+1}}{2B_{k+1}\|G(z^{k+1})\|^{2}}},$ where $\sum e_{k} < \infty.$ Then the Lyapunov functional \eqref{first_functional} is nonincreasing, and has the same order of convergence $O(1/k^{2})$ as in the positive $\gamma_{k}$ moving anchor EAG-V algorithm \eqref{convex_concave_bound}.
\end{lemma}
\begin{proof}
    First, note that the anchor update \eqref{eagv update3} has been modified to become
    \begin{align}
        \label{negative_gamma_update} -\gamma_{k+1} = -\frac{B_{k+1}}{c_{k+1}(1 + \frac{1}{\delta_{k}})},
    \end{align}
    resulting in the following modification to \eqref{eagv re update4}:
    \begin{align}
        \label{modified_neg_update} \bar{z}^{k} - \bar{z}^{k+1} = \gamma_{k+1}G(z^{k+1}).
    \end{align}
    We see the first adjustment in the previous lemma in the transition from line \eqref{for_neg_gamma1} to \eqref{for_neg_gamma2}; note that we focus only on the terms dependent on \eqref{modified_neg_update}:
    \begin{align}
        \notag -&B_{k+1}\langle \bar{z}^{k} - \bar{z}^{k+1}, G(z^{k+1}) \rangle\\
        \notag =-&B_{k+1}\langle \gamma_{k+1}G(z^{k+1}), G(z^{k+1})\rangle\\
        \notag =-&B_{k+1}\langle (2\gamma_{k+1} - \gamma_{k+1})G(z^{k+1}), G(z^{k+1})\rangle\\
        \label{negative_gamma_proof_double_line} =-&B_{k+1}\langle 2\gamma_{k+1}G(z^{k+1}), G(z^{k+1})\rangle + B_{k+1}\langle \gamma_{k+1}G(z^{k+1}),G(z^{k+1})\rangle.
    \end{align}
    The latter term in line \eqref{negative_gamma_proof_double_line} will go on and cancel in a quadratic form as in the proof of the original lemma. Continuing, one will be left over with the term $-B_{k+1}\langle 2\gamma_{k+1}G(z^{k+1}), G(z^{k+1})\rangle$.
    
    At this point, if we proceed as in \cref{LpnvLm_1}, we end up with the inequality 
    \[
        V_{k} - V_{k+1} \geq -2\gamma_{k+1}B_{k+1}\|G(z^{k+1})\|^{2}
    \]
    or, after rearranging,
    \[
        V_{k} - V_{k+1} + 2\gamma_{k+1}B_{k+1}\|G(z^{k+1})\|^{2} \geq 0.
    \]
    By construction, the left-hand side of the inequality should remain nonnegative.
    Now, because
    \[
        \gamma_{k+1} \leq \frac{e_{k+1}}{2B_{k+1}\|G(z^{k+1})\|^{2}},
    \]
    when we proceed as in the proof of \cref{conv_conc_convergence_thm} to show convergence, getting to the line \eqref{bound_on_Vk}, we get the inequality
    \begin{align*}
        V_{k} \leq& V_{0} + \sum_{j = 1}^{k-1} 2\gamma_{j} B_{j} \|G(z^{j})\|^{2}\\
        \leq& V_{0} + \sum_{j = 1}^{k-1} e_{j}\\
        \leq& V_{0} + \sum_{j = 1}^{\infty} e_{j}\\
        =& C V_{0},
    \end{align*}
    where $C$ is a constant. This completes the proof that our algorithm has both a nonincreasing Lyapunov functional and the $O(1/k^{2})$ convergence under the assumption of a (slightly restricted) negative $\gamma_{k}$ term.
\end{proof}

It is worth noting that $z^{k+1}$ is computed before $\gamma_{k+1}$ within the algorithm, so the restriction in \cref{neg_gam_lemma1} and others like it may not be too restrictive to work with.
Our toy numerical tests allowed us to simply put a negative sign in front of the $\gamma_{k}$ terms to attain convergence matching the optimal rate, and which is in some cases markedly faster.
Unfortunately, these results do not give much of an indication as to how exactly the tuning of $\gamma_{k}$ benefits numerical convergence rates.
We leave the theoretical exploration of this phenomena to future work.

\section{Moving anchor in nonconvex/nonconcave minmax problems}

In \cite{lee2021fast}, the methods in \cite{yoon_ryu} are expanded to a broader class of smooth structured nonconvex-nonconcave minimax problems, bringing an $O(1/k^{2})$ rate of convergence rate to a larger class of problems in the setting of minimax games.
This algorithm class is called the FEG, or Fast Extra Gradient method.
We bring the idea of the moving anchor to this more general setting, and show that a moving anchor with more or less the same conditions in the convex-concave setting is also a feasible approach in this class of problems.
Below we give the explicit definition of this FEG modified via a moving anchor, and state its convergence results via a nonincreasing Lyapunov functional and a theorem bounding the squared gradient norm.

The FEG with moving anchor, following \cite{lee2021fast}, is given as

\begin{align}
    \label{feg_moving_alg1} z^{k+1/2} &= z^{k} + \beta_{k}(\bar{z}^{k} - z^{k}) - (1-\beta_{k})(\alpha_{k} + 2\rho_{k})G(z^{k})\\
    \label{feg_moving_alg2} z^{k+1}   &= z^{k} + \beta_{k}(\bar{z}^{k} - z^{k}) - \alpha_{k}G(z^{k+1/2}) - (1-\beta_{k})2\rho_{k}G(z_{k})\\
    \label{feg_moving_alg3} \bar{z}^{k+1} &= \bar{z}^{k} + \gamma_{k+1}G(z^{k+1})\\
    \label{feg_c_def} c_{k+1} &= \frac{c_{k}}{1 + \delta_{k}}\\
    \label{feg_gamma_def} \gamma_{k+1} &= \frac{B_{k+1}}{c_{k+1}(1 + \frac{1}{\delta_{k}})}
\end{align}

\noindent where $\{\delta_{k}\}$ is chosen so that $\displaystyle \sum_{i=0}^{\infty} \log(1 + \delta_{i}) < \infty,$ with $\{\gamma_{k}\}$, and $\{c_{k}\},$ and $c_{\infty}$ chosen in the same method given in the EAG-V with moving anchor, and, as before, $\bar{z}^{0} = z^{0}$.
Before we state the results, two remarks are in order: 
\begin{remark}
	F.
Before we state the results, we point out an additional assumption on the saddle-gradient operator $G$: for some $\rho \in \bigg{(}-\frac{1}{2R},\infty\bigg{)}, \langle G(z) - G(z'), z - z' \rangle \geq \rho\|G(z) - G(z')\|^{2} \; \forall z, z' \in \mathbb{R}^{m}\times \mathbb{R}^{n}$. (Note $z,z'$ are vectors, not matrices.)
	This is known as $\rho-$comonotonicity, and has three sub-conditions.
	For $\rho>0,$ we have cocoercivity; for $\rho=0,$ we have monotonicity; and with $\rho<0$ we have (negative) comonotonicity.
	This condition will hold whenever any FEG variant is discussed throughout this work.
\end{remark}
\begin{remark}
	As in the EAG with moving anchor, one may recover the original fixed anchor FEG by setting $\gamma_{k} = 0$ for all $k.$ 
	This allows us to state our algorithm while also offering an easy reference point for the original fixed anchor version.
\end{remark}

\begin{remark}
    Many of the sequences defined in the following lemma have similar naming conventions to those defined in \cref{LpnvLm_1}. However, the instance of the moving anchor FEG class \eqref{feg_moving_alg1}, \eqref{feg_moving_alg2}, \eqref{feg_moving_alg3} for which we state convergence results utilizes $\alpha_{k} = \frac{1}{R}, \beta_{k} = \frac{1}{k+1}, \rho_{k} = \rho, R_{k} = R$ for $k \geq 0.$ To be clear, \cref{FEG_Lyap_fxnal} is more general and does NOT require these definitions, while \cref{nonconvex_nonconcave_convergence} uses these definitions for explicit convergence results.
\end{remark}

\begin{lemma} \label{FEG_Lyap_fxnal}
    Suppose that the sequences $\{c_{k}\}_{k\geq0}, \{\gamma_{k}\}_{k\geq0}$, are defined as in \eqref{feg_c_def}, \eqref{feg_gamma_def}, and the sequences $\{\alpha_{k}\}_{k\geq0},$ $\{\beta_{k}\}_{k\geq0},$ and $\{R_{k}\}_{k\geq0} \subset (0,\infty),$ and $\{\rho_{k}\}_{k\geq0} \subset \mathbb{R}$ satisfy $\alpha_{0} \in (0,\infty), \alpha_{k} \in (0, \frac{1}{R_{k}}), \beta_{0}=1, \{\beta_{k}\}_{k\geq1} \subseteq (0,1)$ for all $k$.
    Additionally, assume that the following bound, Lipschitz conditions, and comonotonicity conditions respectively hold for a sequence $\{\rho_{k}\} \subset \mathbb{R}$ for all $k\geq0:$
    \begin{align*}
        \frac{(1-\beta_{k+1})}{2\beta_{k+1}}(\alpha_{k+1}+2\rho_{k+1})&-\rho_{k}\leq \frac{1}{2\beta_{k}}(\alpha_{k}+2\rho_{k})-\rho_{k}\\
        \|G(z^{1}) - G(z^{0})\|&\leq R_{0}\|z^{1}-z^{0}\| \\
        \|G(z^{k+1}) - G(z^{k+1/2})\|&\leq R_{k}\|z^{k+1}-z^{k+1/2}\|\\
        \langle G(z^{k+1}) - G(z^{k}),z^{k+1} - z^{k} \rangle& \geq \rho_{k}\|G(z^{k+1}) - G(z^{k})\|^{2}.
    \end{align*}
    If also $\displaystyle A_{0} = \frac{\alpha_{0}(L_{0}^{2}\alpha_{0}^{2}-1)}{2}, B_{0} = 0, B_{1} = 1,$ and
    \[
        A_{k} = \frac{B_{k}(1-\beta_{k})}{2\beta_{k}}(\alpha_{k} + 2\rho_{k}) - B_{k}\rho_{k},\; B_{k+1} = \frac{B_{k}}{1-\beta_{k}},
    \]
    then the Lyapunov functional
    \begin{equation}
        \label{FEG_L_FXNAL} V_{k}:=A_{k}\|G(z^{k})\|^{2}-B_{k}\langle G(z^{k}),\bar{z}^{k} - z^{k} \rangle + c_{k}\|z^{*} - \bar{z}^{k}\|^{2},
    \end{equation}
    where $z^{*}$ is a saddle point, is nonincreasing.
\end{lemma}

\begin{proof}
    This proof proceeds similarly to that of the convex-concave, monotone case in the previous section. First, we write out some relations which will be used shortly:
    \begin{align}
        \label{feg_L_fxnl1}& \qquad z^{k+1} - z^{k} =\frac{\beta_{k}}{1-\beta_{k}}(\bar{z}^{k} - z^{k+1}) - \frac{\alpha_{k}}{1-\beta_{k}}G(z^{k+1/2})-2\rho_{k}G(z^{k})\\
        \label{feg_L_fxnl2}& \qquad z^{k+1} - z^{k} = \beta_{k}(\bar{z}^{k} - z^{k}) - \alpha_{k}G(z^{k+1/2}) - 2\rho_{k}(1-\beta_{k})G(z^{k})\\
        \label{feg_L_fxnl3}& \qquad z^{k+1} - z^{k+1/2} = \alpha_{k}((1-\beta_{k})G(z^{k}) - G(z^{k+1/2}))\\
        \label{feg_L_fxnl4}& \qquad \bar{z}^{k} - \bar{z}^{k+1} = -\gamma_{k+1}G(z^{k+1})
    \end{align}
    As in the proof in the convex-concave case of EAG-V with moving anchor, we introduce a term to the difference of two arbitrary consecutive functionals in our sequence:
    \begin{align}
        \notag &V_{k}-V_{k+1}\\ 
        \geq \notag &A_{k}\|G(z^{k})\|^{2} - B_{k}\langle G(z^{k}), \bar{z}^{k} - z^{k} \rangle - A_{k+1}\|G(z^{k+1})\|^{2} + B_{k+1}\langle G(z^{k+1}), \bar{z}^{k+1} - z^{k+1} \rangle \\
        \notag +&c_{k}\|z^{*}-\bar{z}^{k}\|^{2} - c_{k+1}\|z^{*}-\bar{z}^{k+1}\|^{2}\\ 
        \notag -&\frac{B_{k}}{\beta_{k}}\big{(}\langle G(z^{k+1}) - G(z^{k}),z^{k+1} - z^{k} \rangle - \rho_{k}\|G(z^{k+1}) - G(z^{k})\|^{2}\big{)}\\
        \label{two_inner_prods}=& A_{k}\|G(z^{k})\|^{2} - B_{k}\langle G(z^{k}), \bar{z}^{k} - z^{k} \rangle - A_{k+1}\|G(z^{k+1})\|^{2} + B_{k+1}\langle G(z^{k+1}), \bar{z}^{k+1} - z^{k+1} \rangle \\
        \notag +&c_{k}\|z^{*}-\bar{z}^{k}\|^{2} - c_{k+1}\|z^{*}-\bar{z}^{k+1}\|^{2}\\
        \notag -&\frac{B_{k}}{\beta_{k}}\langle G(z^{k+1}),z^{k+1} - z^{k}\rangle + \frac{B_{k}}{\beta_{k}}\langle G(z^{k}),z^{k+1} - z^{k}\rangle + \frac{B_{k}\rho_{k}}{\beta_{k}}\|G(z^{k+1}) - G(z^{k})\|^{2}
    \end{align}
    From here, we first simplify the introduced term further and then substitute \eqref{feg_L_fxnl1} into the inner product which has a $B_{k}$ out front, and then substitute \eqref{feg_L_fxnl2} into the inner product with a $B_{k+1}$ out front; each of these is in line \eqref{two_inner_prods}.
    After some computation, this leads to
    \begin{align}
        \notag &V_{k} - V_{k+1}\\
        \notag \geq&\big{(}A_{k} - \frac{2B_{k}\rho_{k}(1-\beta_{k})}{\beta_{k}}\big{)}\|G(z^{k})\|^{2} - A_{k+1}\|G(z^{k+1})\|^{2} + \frac{\alpha_{k}B_{k}}{\beta_{k}(1-\beta_{k})}\langle G(z^{k+1}),G(z^{k+1/2})\rangle\\
        \notag +&\frac{2\rho_{k}B_{k}}{\beta_{k}}\langle G(z^{k+1}), G(z^{k})\rangle - \frac{\alpha_{k}B_{k}}{\beta_{k}}\langle G(z^{k}), G(z^{k+1/2})\rangle + B_{k+1}\langle G(z^{k}), \bar{z}^{k+1} - \bar{z}^{k}\rangle\\
        \notag +&\frac{B_{k}\rho_{k}}{\beta_{k}}\|G(z^{k+1}) - G(z^{k})\|^{2} + c_{k}\|z^{*}-\bar{z}^{k}\|^{2} - c_{k+1}\|z^{*}-\bar{z}^{k+1}\|^{2}\\
        \label{second_to_last_line}=& \big{(} A_{k} - \frac{B_{k}\rho_{k}(1-2\beta_{k})}{\beta_{k}}\big{)}\|G(z^{k})\|^{2} - \big{(} A_{k} - \frac{B_{k}\rho_{k}}{\beta_{k}}\big{)}\|G(z^{k+1})\|^{2} + \frac{\alpha_{k}B_{k}}{\beta_{k}(1-\beta_{k})}\langle G(z^{k+1}),G(z^{k+1/2})\rangle\\
        \label{focus_last_three_terms}-&\frac{\alpha_{k}B_{k}}{\beta_{k}}\langle G(z^{k}), G(z^{k+1/2})\rangle + B_{k+1}\langle G(z^{k+1}), \bar{z}^{k+1} - \bar{z}^{k} \rangle + c_{k}\|z^{*}-\bar{z}^{k}\|^{2} - c_{k+1}\|z^{*}-\bar{z}^{k+1}\|^{2}.
    \end{align}
    Next, let's focus on the last three terms in \eqref{focus_last_three_terms}: $B_{k+1}\langle G(z^{k+1}), \bar{z}^{k+1} - \bar{z}^{k} \rangle + c_{k}\|z^{*}-\bar{z}^{k}\|^{2} - c_{k+1}\|z^{*}-\bar{z}^{k+1}\|^{2}.$
    By Cauchy-Schwartz,
    \[
        \|z^{*}-\bar{z}^{k+1}\|^{2} \leq (1 + \delta_{k})\|z^{*}-\bar{z}^{k}\|^{2} + (1+\frac{1}{\delta_{k}})\|\bar{z}^{k} - \bar{z}^{k+1}\|^{2}.
    \]
    Second, by construction
    \[
        B_{k+1}\langle G(z^{k+1}), \bar{z}^{k} - \bar{z}^{k+1} \rangle = \frac{B_{k+1}}{\gamma_{k+1}}\|\bar{z}^{k} - \bar{z}^{k+1}\|^{2}
    \]
    and
    \[
        c_{k+1}\leq \frac{c_{k}}{1+\delta_{k}}, \; \gamma_{k+1} \leq \frac{B_{k+1}}{c_{k+1}(1 + \frac{1}{\delta_{k}})}.
    \]
    Applying these facts to the three terms we're considering, we get that
    \begin{align*}
        &B_{k+1}\langle G(z^{k+1}), \bar{z}^{k+1} - \bar{z}^{k} \rangle + c_{k}\|z^{*}-\bar{z}^{k}\|^{2} - c_{k+1}\|z^{*}-\bar{z}^{k+1}\|^{2}\\
        \geq& \frac{B_{k+1}}{\gamma_{k+1}}\|\bar{z}^{k+1} - \bar{z}^{k}\|^{2} + c_{k}\|z^{*}-\bar{z}^{k}\|^{2} - c_{k+1}\big{(}(1+\delta_{k})\|z^{*}-\bar{z}^{k}\|^{2} + (1+\frac{1}{\delta_{k}})\|\bar{z}^{k} - \bar{z}^{k+1}\|^{2}\big{)}\\
        \geq& \frac{B_{k+1}}{B_{k+1}}c_{k+1}(1 + \frac{1}{\delta_{k}})\|\bar{z}^{k} - \bar{z}^{k+1}\|^{2} + c_{k}\|z^{*}-\bar{z}^{k}\|^{2} \\
        &- c_{k+1}(1+\delta_{k})\|z^{*}-\bar{z}^{k}\|^{2} - c_{k+1}(1+\frac{1}{\delta_{k}})\|\bar{z}^{k} - \bar{z}^{k+1}\|^{2}\\
        \geq&c_{k}\|z^{*}-\bar{z}^{k}\|^{2} - c_{k+1}(1+\delta_{k})\|z^{*}-\bar{z}^{k}\|^{2} \geq c_{k}\|z^{*}-\bar{z}^{k}\|^{2} - c_{k}\|z^{*}-\bar{z}^{k}\|^{2} \geq 0.
    \end{align*}
    
    While this takes care of the latter three terms in lines \eqref{second_to_last_line} to \eqref{focus_last_three_terms}, that everything else is nonnegative is a nontrivial argument.
    However, it directly follows the proof of Lemma 7.1 in \cite{lee2021fast}, so as before we refer to their proof, and then our Lyapunov functional is also nonincreasing.
\end{proof}

\begin{theorem}[$O(1/k^{2})$ convergence rate for FEG with moving anchor] \label{nonconvex_nonconcave_convergence}
    For the $R-$Lipschitz continuous and $\rho-$comonotone operator G where $\rho>-\frac{1}{2R}$, and $z^{*} \in Z_{*}(G), Z_{*}(G) := \{z^{*} \in \mathbb{R}^{d}: G(z^{*}) = 0\},$ and $c_{\infty} - \frac{1}{\frac{1}{R} + 2\rho} \geq 0,$ the sequence $\{z^{k}\}_{k\geq0}$ generated by FEG with moving anchor satisfies
    \[
        \|G(z^{k})\|^{2} \leq \frac{4c_{0}\|z^{0}-z^{*}\|^{2}}{k^{2}(\frac{1}{R} + 2\rho)}
    \]
    for all $k\geq1.$
\end{theorem}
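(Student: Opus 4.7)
The plan is to sandwich $V_k$ between an initial-data upper bound and a lower bound proportional to $k^2\|G(z^k)\|^2$, mirroring the proof of Theorem 4.2 in the convex--concave setting.

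First I would invoke Lemma 5.3 to obtain $V_k \leq V_0$ for every $k$. Because $B_0 = 0$ and $\bar{z}^0 = z^0$, the cross term in $V_0$ vanishes, leaving $V_0 = A_0\|G(z^0)\|^2 + c_0\|z^0 - z^*\|^2$. The specified $A_0 = \tfrac{\alpha_0(R_0^2\alpha_0^2 - 1)}{2}$ is non-positive whenever $\alpha_0 \in (0, 1/R_0)$, so dropping that term gives the clean estimate $V_k \leq c_0\|z^0 - z^*\|^2$.

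Next, for the lower bound, I would isolate $\|G(z^k)\|^2$ by splitting the inner product as $\langle G(z^k), z^k - \bar z^k\rangle = \langle G(z^k), z^k - z^*\rangle + \langle G(z^k), z^* - \bar z^k\rangle$. Applying $\rho$-comonotonicity together with $G(z^*) = 0$ to the first summand yields $\langle G(z^k), z^k - z^*\rangle \geq \rho\|G(z^k)\|^2$, while completing the square on the remainder against the anchor-distance term produces
\[
B_k\langle G(z^k), z^* - \bar z^k\rangle + c_k\|z^* - \bar z^k\|^2 \;\geq\; -\tfrac{B_k^2}{4c_k}\|G(z^k)\|^2.
\]
Combining these eliminates all $\bar z^k$-dependence and leaves the clean lower bound
\[
V_k \;\geq\; \Bigl(A_k + B_k\rho - \tfrac{B_k^2}{4c_k}\Bigr)\|G(z^k)\|^2.
\]

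Finally, I would substitute the FEG parameter schedule from \cite{lee2021fast} ($\beta_k = 1/(k+1)$, $\rho_k \equiv \rho$, $\alpha_k \equiv 1/R$), under which $B_k = k$ and $A_k + B_k\rho = \tfrac{k^2}{2}\bigl(\tfrac{1}{R} + 2\rho\bigr)$. Since $c_k$ is monotonically decreasing to $c_\infty$ and by hypothesis $c_\infty \geq 1/(\tfrac{1}{R}+2\rho)$, the subtracted term satisfies $\tfrac{B_k^2}{4c_k} \leq \tfrac{k^2(\tfrac{1}{R}+2\rho)}{4}$, so that the full coefficient is at least $\tfrac{k^2}{4}\bigl(\tfrac{1}{R}+2\rho\bigr)$. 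Chaining with the upper bound $V_k \leq c_0\|z^0-z^*\|^2$ and dividing delivers the claimed rate. The step I expect to require the most care is the coefficient inequality in this last paragraph: one must identify the comonotonicity parameter $\rho$ with the $\rho_k$ appearing in the definition of $A_k$, and verify that the hypothesis on $c_\infty$ matches precisely the threshold needed to absorb the $-B_k^2/(4c_k)$ correction; once that bookkeeping is in place, the remaining manipulations are routine.
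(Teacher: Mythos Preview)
Your proposal is correct and follows essentially the same route as the paper: invoke the Lyapunov lemma with the schedule $\alpha_k=1/R$, $\beta_k=1/(k+1)$, $\rho_k=\rho$ (so $B_k=k$, $A_k+B_k\rho=\tfrac{k^2}{2}(\tfrac1R+2\rho)$, and $A_0=0$, giving $V_0=c_0\|z^0-z^*\|^2$), apply $\rho$-comonotonicity with $G(z^*)=0$ to handle $\langle G(z^k),z^k-z^*\rangle$, and then use the assumption on $c_\infty$ to absorb the anchor-distance term. The only cosmetic difference is that you complete the square directly to obtain $B_k\langle G(z^k),z^*-\bar z^k\rangle+c_k\|z^*-\bar z^k\|^2\ge -\tfrac{B_k^2}{4c_k}\|G(z^k)\|^2$, whereas the paper inserts an intermediate Cauchy--Schwarz step and then Young's inequality with the auxiliary parameter $\delta=2/(\tfrac1R+2\rho)$; both manipulations yield the identical coefficient $\tfrac{k^2}{4}(\tfrac1R+2\rho)$ in front of $\|G(z^k)\|^2$.
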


\begin{proof}
    Under the same assumptions as \cref{FEG_Lyap_fxnal}, we take $\alpha_{k} = 1/R, \beta_{k} = \frac{1}{k+1}, R_{k}=R, \rho_{k} = \rho,$ which satisfy the conditions in the statement for all $k$ greater than or equal to $0.$
    These give us $B_{k} = k, A_{k} = \frac{k^{2}}{2}(\frac{1}{R} + 2\rho) - k\rho.$
    
    From here,
    \[
        c_{0}\|z^{*} - z^{0}\|^{2} = V_{0} \geq V_{k} = \bigg{(}\frac{k^{2}}{2}(\frac{1}{R} + 2\rho) -k\rho\bigg{)}\|G(z^{k})\|^{2} - k\langle G(z^{k}), \bar{z}^{k} - z^{k} \rangle + c_{k}\|z^{*} - \bar{z}^{k}\|^{2},
    \]
    so then
    \begin{align*}
        \frac{k^{2}}{2}&(\frac{1}{L} + 2\rho)\|G(z^{k})\|^{2} + c_{k}\|z^{*} - \bar{z}^{k}\|^{2}\\
        \leq& k\langle G(z^{k}), \bar{z}^{k} - z^{k} \rangle + k\rho\|G(z^{k})\|^{2} + c_{0}\|z^{*} - z_{0}\|^{2}\\
        \leq&k\langle G(z^{k}), \bar{z}^{k} - z^{*} \rangle + c_{0}\|z^{*} - z_{0}\|^{2} \text{ (by comonotonicity condition)}\\
        \leq&k\|G(z^{k})\| \|\bar{z}^{k} - z^{*}\| + c_{0}\|z^{*} - z_{0}\|^{2}\\
        \leq&\frac{k^{2}}{2\delta}\|G(z^{k})\|^{2} + \frac{\delta}{2}\|\bar{z}^{k} - z^{*}\|^{2} + c_{0}\|z^{*} - z_{0}\|^{2}.
    \end{align*}
    From here, define $\frac{1}{\delta} = \frac{1}{2R} + \rho$.
    Then we have that 
    \[
        \frac{k^{2}}{2}\bigg{(} \frac{1}{R} + 2\rho - \frac{1}{2R} - \rho\bigg{)}\|G(z^{k})\|^{2} + \bigg{(} c_{\infty} - \frac{1}{\frac{1}{R} + 2\rho}\bigg{)}\|\bar{z}^{k} - z^{*}\|^{2} \leq c_{0}\|z^{*} - z_{0}\|^{2}, 
    \]
    and as long as the constant $c_{\infty} - \frac{1}{\frac{1}{R} + 2\rho} \geq 0,$ we obtain the desired result by dividing both sides of the inequality
    \[
        \frac{k^{2}}{2}\bigg{(} \frac{1}{2R} + \rho\bigg{)}\|G(z^{k})\|^{2} \leq c_{0}\|z^{*} - z_{0}\|^{2}
    \]
    by $\frac{k^{2}}{2}\bigg{(} \frac{1}{2R} + \rho\bigg{)}$.
\end{proof}
See \cite{lee2021fast}'s proof of Theorem 4.1 for the analogous result with a fixed anchor.
Next, we show that having $-\gamma_{k+1}$ in place of $\gamma_{k+1}$ may also, with some additional assumptions, provide a convergent algorithm.

\begin{lemma} \label{neg_gam_lemma2}
    In the setting of \cref{FEG_Lyap_fxnal}, replace $\gamma_{k}$ with $-\gamma_{k}$ in the definition of the FEG algorithm with moving anchor, and suppose $\displaystyle \gamma_{k+1} = \min \frac{B_{k+1}}{c_{k+1}(1 + \frac{1}{\delta_{k}})}, \frac{e_{k+1}}{2B_{k+1}\|G(z^{k+1})\|^{2}},$ where $\sum e_{k} < \infty.$ Then the Lyapunov functional described in \cref{FEG_Lyap_fxnal} is nonincreasing, and we attain the same order of convergence for the FEG with moving anchor and $-\gamma_{k}.$
\end{lemma}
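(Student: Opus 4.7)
The plan is to mimic the strategy of Lemma \ref{neg_gam_lemma1} almost verbatim, replacing the EAG-V proof skeleton with that of Lemma \ref{FEG_Lyap_fxnal}. First I would adjust the reorganized identity \eqref{feg_L_fxnl4}: with $-\gamma_{k+1}$ in place of $\gamma_{k+1}$, the anchor update \eqref{feg_L_fxnl4} becomes $\bar{z}^{k}-\bar{z}^{k+1}=\gamma_{k+1}G(z^{k+1})$ (sign flipped). Tracing through the proof of Lemma \ref{FEG_Lyap_fxnal}, the only place the sign of $\gamma_{k+1}$ enters in a non-quadratic fashion is the inner product $B_{k+1}\langle G(z^{k+1}),\bar{z}^{k+1}-\bar{z}^{k}\rangle$ appearing in the grouping \eqref{focus_last_three_terms}. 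All other occurrences of the anchor difference enter through the norm $\|\bar{z}^{k}-\bar{z}^{k+1}\|^{2}$, which is sign-insensitive; in particular, the Cauchy–Schwarz bound and the identity $B_{k+1}\|\bar{z}^{k}-\bar{z}^{k+1}\|^{2}/\gamma_{k+1} = B_{k+1}\gamma_{k+1}\|G(z^{k+1})\|^{2}$ are unaffected.

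Next I would perform the same additive splitting that was used in \eqref{negative_gamma_proof_double_line}: write
\[
B_{k+1}\langle G(z^{k+1}),\bar{z}^{k+1}-\bar{z}^{k}\rangle \;=\; -B_{k+1}\gamma_{k+1}\|G(z^{k+1})\|^{2} \;=\; B_{k+1}\gamma_{k+1}\|G(z^{k+1})\|^{2} \;-\; 2B_{k+1}\gamma_{k+1}\|G(z^{k+1})\|^{2}.
\]
The first summand on the right feeds into the rest of the proof of Lemma \ref{FEG_Lyap_fxnal} exactly as if we had the original $+\gamma_{k+1}$ update; together with the two $c_{k},c_{k+1}$ difference terms and the Cauchy bound, it contributes a nonnegative quantity, while all the other $G$-inner-product terms are handled by the argument imported from \cite{lee2021fast}'s Lemma 7.1 (which is insensitive to the anchor sign). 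The second summand is an error term, producing the single inequality
\[
V_{k}-V_{k+1}\;\geq\;-2\gamma_{k+1}B_{k+1}\|G(z^{k+1})\|^{2}.
\]

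With the hypothesis $\gamma_{k+1}\leq \dfrac{e_{k+1}}{2B_{k+1}\|G(z^{k+1})\|^{2}}$, this error is bounded by $e_{k+1}$, so $V_{k+1}\leq V_{k}+e_{k+1}$. Telescoping and using $\sum_{j}e_{j}<\infty$ yields $V_{k}\leq V_{0}+\sum_{j=1}^{\infty}e_{j}=:CV_{0}$, i.e.\ the Lyapunov functional remains uniformly bounded (though no longer monotone). Plugging this modified bound on $V_{k}$ into the closing computation of Theorem 4.2 in place of the bound $V_{k}\leq V_{0}$ produces the same $O(1/k^{2})$ rate on $\|G(z^{k})\|^{2}$, only with $c_{0}\|z^{0}-z^{*}\|^{2}$ replaced by $C\cdot c_{0}\|z^{0}-z^{*}\|^{2}$.

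The main obstacle I foresee is a bookkeeping one rather than a conceptual one: because the FEG proof has several cross terms (involving $\rho_{k}$, $(1-\beta_{k})$, and the intermediate iterate $z^{k+1/2}$) that do \emph{not} appear in EAG-V, I must verify carefully that none of them secretly acquires a sign dependence on $\gamma_{k+1}$ when \eqref{feg_L_fxnl4} is flipped. A direct inspection shows they do not---these terms depend only on $G(z^{k}),G(z^{k\pm 1/2}),G(z^{k+1})$ and the square $\|\bar{z}^{k}-\bar{z}^{k+1}\|^{2}$---so the sign-splitting reduction above is the only new ingredient needed, and the rest of the argument transports from Lemma \ref{neg_gam_lemma1} unchanged.
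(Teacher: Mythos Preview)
Your proposal is correct and follows essentially the same approach as the paper: the paper's own proof simply states that the argument proceeds exactly as in \cref{neg_gam_lemma1}, and your write-up spells out precisely that transport---flip the sign in \eqref{feg_L_fxnl4}, perform the $2\gamma_{k+1}-\gamma_{k+1}$ splitting on the unique sign-sensitive inner product, absorb one piece into the existing quadratic cancellation, and bound the leftover $-2\gamma_{k+1}B_{k+1}\|G(z^{k+1})\|^{2}$ by $e_{k+1}$ before telescoping. Your additional remark that the $\rho_{k}$- and $(1-\beta_{k})$-dependent cross terms are sign-insensitive is a useful sanity check the paper leaves implicit.
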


\begin{proof}
    The proof proceeds in exactly the same manner as that in \cref{neg_gam_lemma1}.
\end{proof}

As in the EAG-V with moving anchor case, we suspect this restriction is not too major a restriction based off of numerical results, and that there is a `better' way to show that the $-\gamma_{k}$ version of our algorithm converges.

\section{Numerical experiments}

In this section we detail several numerical experiments. First, we visualize two thousand iterations of EAG-V and FEG, each moving anchor versus the fixed anchor, on a toy `almost bilinear' example. Next, we look at the log of the grad norm squared versus the log of iterations for the EAG examples. Note that this error graph is an example in the monotone convex-concave case. We then run a nonconvex-nonconcave negative comonotone example for FEG variants, where some interesting convergence behaviors among the moving anchor variants are exhibited. Finally, we study monotone FEG variants (moving and fixed anchor) on a nonlinear two player game. Throughout all of these examples, $c_{1} = \pi^{2}/6,$, $c_{k} = \frac{c_{k-1}}{1+\delta_{k-1}} (k=2,3,...),$ and in all except for the last example, $\delta_{k}$ is chosen to be $\exp(1/k^{2}) - 1.$

\begin{figure}[h!]
    \centering
    \includegraphics[scale=.5]{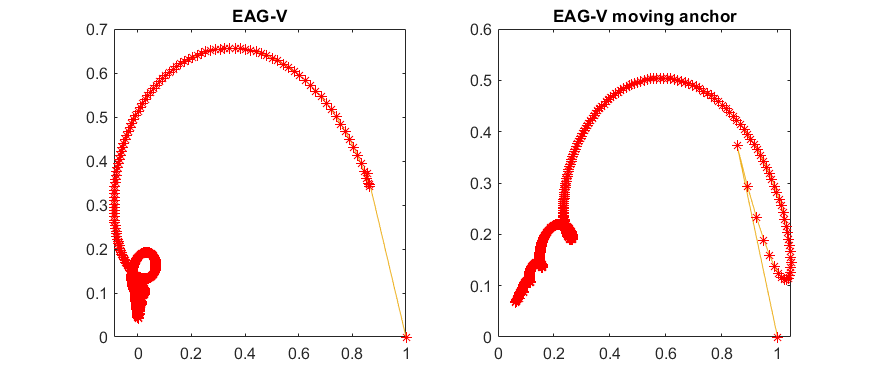}
    \caption{The first two thousand iterations of the EAG algorithm with varying step-size, or EAG-V, compared to the first two thousand iterations of the moving anchor EAG-V algorithm.}
    \label{fig:fixed_vs_moving_EAGV}
\end{figure}

\cref{fig:fixed_vs_moving_EAGV} compares the iterations of EAG-V with a fixed anchor to the iterations for the moving anchor EAG-V. 
\cref{fig:fixed_vs_moving_EAGV}, \cref{fig:moving_EAGV_anchors}, and \cref{fig:moving_FEG_anchors} all display iterations where the function used is the `almost bilinear' function $f:\mathbb{R}^{2} \to \mathbb{R}, f(x,y) = \epsilon \frac{\|x\|^2}{2} + \langle x,y \rangle - \epsilon \frac{\|y\|^2}{2}.$
Here, $\epsilon$ is small, for these experiments set to $0.01,$ and the straightforward nature of the example allows for ease of visualizing the iterations as well as their differences when it comes to comparing convergence rates. 
In particular, the unique saddle-point is $(0,0).$

\begin{figure}[h!]
	\centering
	\includegraphics[scale=.6]{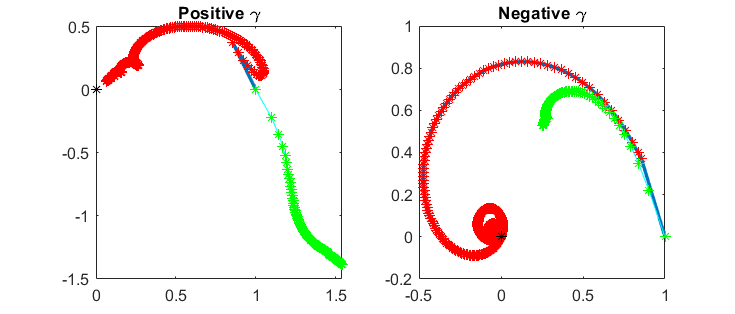}
	\caption{The two moving anchor EAG-V variants compared in red, along with their anchors in green.}
	\label{fig:moving_EAGV_anchors}
\end{figure}

\cref{fig:moving_EAGV_anchors} compares, via the same function as \cref{fig:fixed_vs_moving_EAGV}, the two moving anchor variants of EAG-V.
When the $\gamma_{k}$ parameter is positive, the anchor iterations moves away from the saddle and the algorithm updates very rapidly.
When $\gamma_{k}$ has only its sign changed to negative, the anchor (seen in green) seems to stay much closer to the iterations and the saddle-point.
The iterations appear to converge at a markedly faster rate (by a constant) for this latter case over both the fixed anchor and the positive $\gamma_{k}$ setting, an observation that is confirmed below.

\begin{figure}[h!]
	\centering
	\includegraphics[scale=.6]{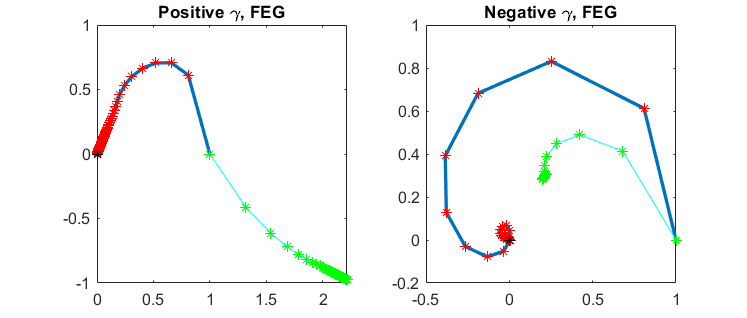}
	\caption{The two moving anchor FEG variants compared in red, along with their anchors in green.}
	\label{fig:moving_FEG_anchors}
\end{figure}

\cref{fig:moving_FEG_anchors} compares the two moving anchor version of the FEG method, in the same manner as the comparison shown in \cref{fig:moving_EAGV_anchors}: red dots are the algorithm updates, green dots are the anchor updates, and the function is the `almost bilinear' one previously described.
Note that in both cases, the iterations seem to zone in on and converge to the saddle point in a much faster manner.
In \cite{lee2021fast}, the authors established that even on convex-concave problems, FEG performs at the same optimal order of convergence as EAG, but at a significantly faster rate.
This behavior seems to have carried over to our algorithm where we introduce the moving anchor to these frameworks.

\begin{figure}[h!]
	\centering
	\includegraphics[scale=.5]{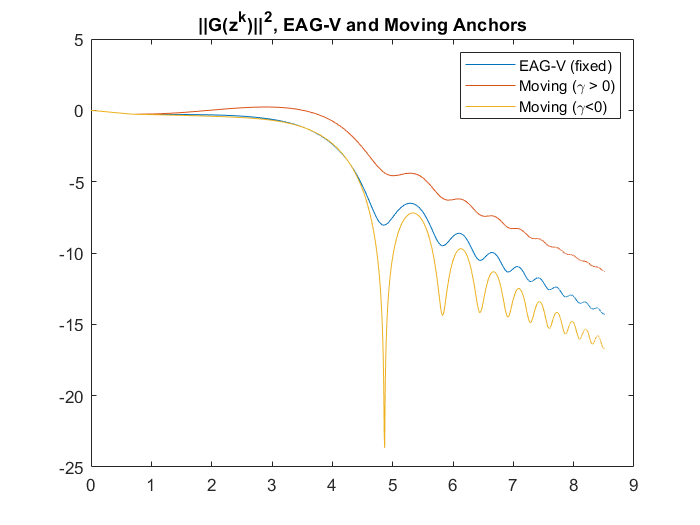}
	\caption{Comparison of the grad-norm squared of three EAG-V variants of interest on a toy `almost bilinear' problem.}
	\label{fig:grad_norms_sqrd_eagv}
\end{figure}

\cref{fig:grad_norms_sqrd_eagv} captures the behavior of $\|G(z^{k})\|^{2}$ across all three convex-concave algorithms of interest: EAG-V, moving anchor EAG-V with positive $\gamma_{k},$ and moving anchor EAG-V with negative $\gamma_{k}.$
Each algorithm attains the optimal order of convergence, while the negative $\gamma_{k}$ algorithm is markedly faster than both algorithms by a constant.
Identical behavior occurrsed under the same problem setting with the FEG and FEG with moving anchors (positive and negative $\gamma_{k}$), with the negative $\gamma_{k}$ algorithm again being the fastest, so we do not include this figure here.

\begin{figure}[h!]
	\centering
	\includegraphics[scale=.5]{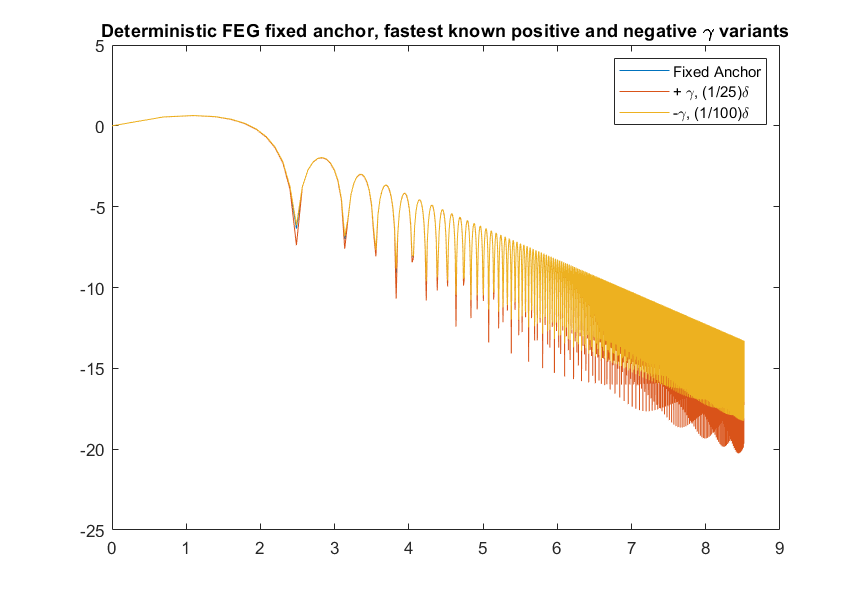}
	\caption{Comparison of the errors of three FEG variants in a nonconvex-nonconcave setting. Note the positive $\gamma$ with $\delta$ scaled by $1/25$ converges fastest.}
	\label{fig:grad_norms_sqrd_worse_FEG}
\end{figure}

\cref{fig:grad_norms_sqrd_worse_FEG} captures the error of FEG across all three anchor variants in a numerical example that is explicitly negative comonotone with a nonconvex-nonconcave objective:

\[L(x,y) = \frac{\rho R^{2}}{2}x^{2} + R\sqrt{1 - \rho^{2}R^{2}}xy - \frac{\rho R^{2}}{2}y^{2}\]

with $L: \mathbb{R}^{2} \to \mathbb{R}, R = 1, \rho = -1/3$ $1-$smooth and $-1/3$-negative comonotone.
Interestingly, in this scenario a variant of the positive $\gamma_{k}$ moving anchor algorithm is the fastest when $\delta_{k}$ is scaled by $1/25$, in sharp contrast to the result displayed in \cref{fig:grad_norms_sqrd_eagv}.
The fixed anchor and the negative $\gamma_{k}$ values seem to almost coincide.
This result suggests that different problem settings offer different optimal anchoring choices, and that in the negative comonotone setting, the fixed anchor FEG may be nearly optimal.



Finally, \cref{fig:nonlinear_games}, compares three different monotone FEG variants on a particular nonlinear game that was studied extensively in \cite{chen2014optimal}:
\[\min_{x \in \Delta^{n}} \max_{y \in \Delta^{m}} \frac{1}{2}\langle Qx,x\rangle + \langle Kx,y \rangle \]
where $Q = A^{T}A$ is positive semidefinite for $A \in \mathbb{R}^{k \times n}$ which has entries generated independently from the standard normal distribution, $K \in \mathbb{R}^{m \times n}$ with entries generated uniformly and independently from the interval $[-1,1],$ and $\Delta^{n}, \Delta^{m}$ are the $n-$ and $m-$simplices, respectively:
\[\Delta^{n} := \Big{\{} x\in\mathbb{R}_{+}^{n}: \sum_{i=1}^{n}x_{i} = 1 \Big{\}}, \; \Delta^{m} := \Big{\{} y\in\mathbb{R}_{+}^{m}: \sum_{j=1}^{m}y_{j} = 1 \Big{\}}.\]
One may interpret this as a two person game where player one has $n$ strategies to choose from, choosing strategy $i$ with probability $x_{i} \; (i = 1, . . . , n)$ to attempt to minimize a loss, while the second player attempt to maximize their gain among $m$ strategies with strategy $j$ chosen with probability $y_{j} \; (j = 1, \ldots , m).$
The payoff is a quadratic function that depends on the strategy of both players.
This was implemented following the 3 operator splitting scheme in \cite{davis2015three}, with parameter $\lambda = 0.25$.

For this example, we used FEG fixed and moving anchor variants in the monotone (that is, $\rho=0$) setting of the algorithm.
For the high dimensional setting, we compute $20,000$ iterations and view the log of the grad norm squared of the fixed anchor versus the positive and negative $\gamma_{k}$ moving anchor variants.
For the low dimensional setting, we compare the same algorithms but only compare this on $8,000$ iterations, as the convergence behavior in this lower dimensional setting is more quickly distinguished.

In both settings, the positive $\gamma_{k}$ variant is the fastest (ie, most accelerated) algorithm by a significant margin, even with different random seeds selected across both test runs.
This is a significant contrast to the EAG-V toy example, also a convex-concave problem, where the $-\gamma_{k}$ variant of the moving anchor is the fastest algorithm.
Taken together, both results are promising for the moving anchor framework, but suggest that more theoretical work is to be done to understand the acceleration mechanism offered by anchoring variants, and what variant will be fastest in a given problem.

\begin{figure}[h]

\begin{subfigure}{0.5\textwidth}
\includegraphics[width=0.9\linewidth, height=6cm]{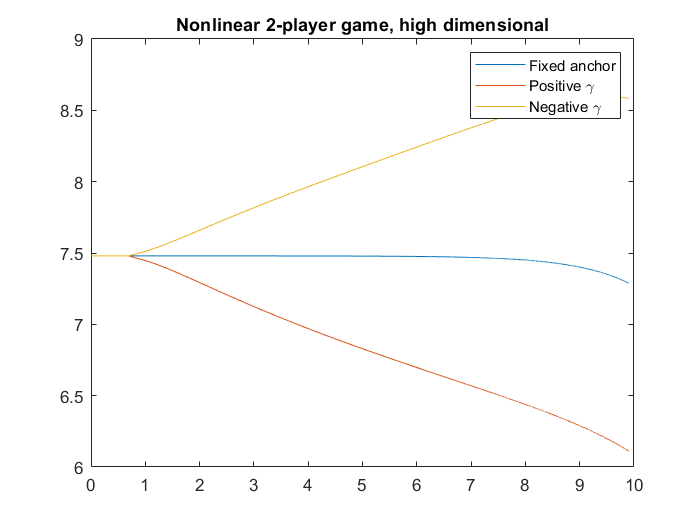} 
\caption{High dimensional nonlinear game.}
\label{fig:high_dim_game}
\end{subfigure}
\begin{subfigure}{0.5\textwidth}
\includegraphics[width=0.9\linewidth, height=6cm]{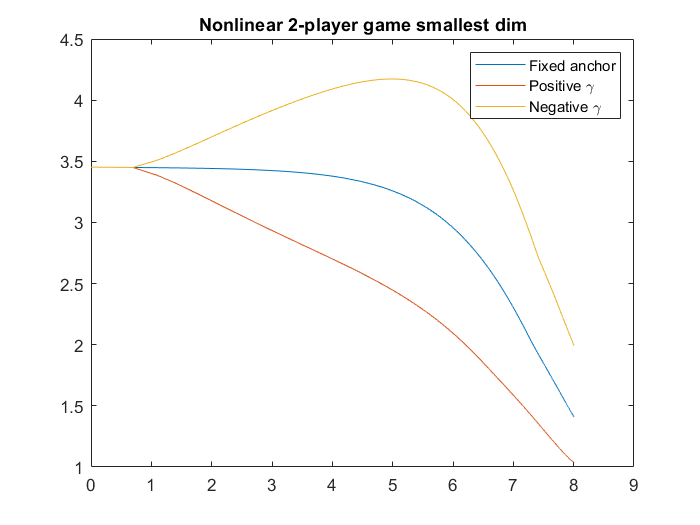}
\caption{Low dimensional nonlinear game.}
\label{fig:low_dim_game}
\end{subfigure}

\caption{High dimensional and low dimensional variants of 2-player nonlinear game with anchoring algorithms}
\label{fig:nonlinear_games}
\end{figure}

\clearpage

\section{Conclusion}

The moving anchor variants of anchored acceleration methods retain optimal convergence rates and also demonstrate superior-to-comparable numerical performance with some parameter tuning.
The optimal order of convergence is obtained across different problem settings, from convex-concave to negative co-monotone problems. 
Interestingly, across numerous problem settings there exists a version of the moving anchor algorithm, parametrized by $\gamma_{k},$ that demonstrates superior numerical performance compared to other state-of-the-art algorithms.
The variety of numerical examples demonstrates a wide array of applications for our algorithms in both theoretical and applied settings.
Of future interest, one may consider parallelized/asynchronous implementations of moving anchor saddle point algorithms, a tighter analysis of $-\gamma_{k}$ convergence, a theoretical understanding of how the $\gamma_{k}$ and other parameters such as $\delta$ affect convergence rates, implementations of the moving anchor in other Halpern-adjacent frameworks, and the identification of problem settings which our moving anchor may exploit effectively among many other topics.

\section*{Acknowledgments}

We thank Donghwan Kim, Ernest K. Ryu, and Taeho Yoon for their invaluable suggestions that have greatly improved and inspired this work.

\clearpage

\bibliographystyle{amsplain}
\bibliography{bibliography}

\end{document}